\DeclareMathAlphabet\EuRoman{U}{eur}{m}{n}
\SetMathAlphabet\EuRoman{bold}{U}{eur}{b}{n}
\crefname{assumption}{Assumption}{Assumptions}
\crefname{claim}{Claim}{Claims}
\let\reftagform@=\tagform@
\def\tagform@#1{\maketag@@@{\ignorespaces\textcolor{gray}{(#1)}\unskip\@@italiccorr}}
\renewcommand{\eqref}[1]{\textup{\reftagform@{\ref{#1}}}}
\definecolor{WowColor}{rgb}{.75,0,.75}
\definecolor{SubtleColor}{rgb}{0,0,.50}
\newcounter{margincounter}
\declaretheorem[style=plain,numberwithin=section,name=Theorem]{theorem}
\declaretheorem[style=plain,sibling=theorem,name=Lemma]{lemma}
\declaretheorem[style=plain,sibling=theorem,name=Claim]{claim}
\declaretheorem[style=definition,sibling=theorem,name=Definition]{definition}
\declaretheorem[style=definition,sibling=theorem,name=Example]{example}
\declaretheoremstyle[
    spaceabove=-6pt,
    spacebelow=6pt,
    headfont=\normalfont\bfseries,
    bodyfont = \normalfont,
    postheadspace=1em,
    qed=$\square$,
    headpunct={{}}]{myproofstyle}
\numberwithin{equation}{section}
\numberwithin{theorem}{section}
\def\[#1\]{\begin{align}#1\end{align}}
\def\*[#1\]{\begin{align*}#1\end{align*}}
\newcommand{\Nats}{\mathbb{N}}
\DeclareMathOperator*{\newlim}{\mathrm{lim}\vphantom{\mathrm{infsup}}}
\DeclareMathOperator*{\newinf}{\mathrm{inf}\vphantom{\mathrm{infsup}}}
\DeclareMathOperator*{\newsup}{\mathrm{sup}\vphantom{\mathrm{infsup}}}
\renewcommand{\lim}{\newlim}
\renewcommand{\inf}{\newinf}
\renewcommand{\sup}{\newsup}
\newcommand{\cF}{\mathcal F}
\newcommand{\cG}{\mathcal G}
\newcommand{\NSE}[1]{{^{*}#1}}
\newcommand{\ST}{\mathsf{st}}
\newcommand{\cA}{\mathcal{A}}
\newcommand{\cC}{\mathcal{C}}
\newcommand{\cE}{\mathbb{E}}
\newtheorem{open problem}{Open Problem}
\newcommand{\Loeb}[1]{\overline{#1}}
\newcommand{\interior}[1]{%
  {\kern0pt#1}^{\mathrm{o}}%
}
\newcommand{\refproof}[1]{See \cref{#1} for \IfSubStr{#1}{,}{proofs}{a proof}. }
\newif\iflongform
\providecommand*{\toclevel@definition}{0}
\providecommand*{\toclevel@theorem}{0}
\providecommand*{\toclevel@lemma}{0}
\title[Loeb Extension and Loeb Equivalence]
{
Loeb Extension and Loeb Equivalence
}
\newcommand{\cM}{\mathcal{M}}
\newcommand{\cN}{\mathcal{N}}
\newcommand{\cH}{\mathcal{H}}
\newcommand{\cI}{\mathcal{I}}
\newcommand{\cJ}{\mathcal{J}}
\newcommand{\cV}{\mathcal{V}}
\newcommand{\cU}{\mathcal{U}}
\newtheorem{question}{Question}
\begin{document}

\author[R.~M.~Anderson]{Robert M.~Anderson}
\address{Robert M.~Anderson, University of California, Berkeley, Department of Economics}

\author[H.~Duanmu]{Haosui Duanmu}
\address{Haosui Duanmu, University of California, Berkeley, Department of Economics}

\author[D.~Schrittesser]{David Schrittesser}
\address{David Schrittesser, University of Vienna, Kurt G{\"o}del Research Center}

\author[W.~Weiss]{William Weiss}
\address{William Weiss, University of Toronto, Department of Mathematics}


\maketitle

\begin{abstract}
In \citep{loebsun}, the authors raise several open problems on Loeb equivalences between various internal probability spaces. We provide counter-examples for the first two open problems. Moreover, we reduce the third open problem to the following question:  Is the internal algebra generated by the union of two Loeb equivalent internal algebras a subset of the Loeb extension of any one of the internal algebra? 

\end{abstract}
\section{Introduction}
The Loeb measure construction (\citep{Loeb75}) has provided many fruitful applications in various areas in mathematics such as probability theory (see \citep{andersonisrael}, \citep{nsweak}, \citep{localtime}, \citep{Keisler87}, \citep{stoll}, \citep{brownfrac}, \citep{hylevy}, \citep{Markovpaper}, \citep{anderson2018mixhit} etc), statistical decision theory (see \citep{nsbayes}), potential theory (\citep{loebpotential}), mathematical physics (see \citep{nsphysics}) and mathematical economics (see \citep{robeco}, \citep{ali74}, \citep{nsexchange}, \citep{alir75}, \citep{oligopoly}, \citep{rashid78}, \citep{emmons84}, \citep{strongcore}, \citep{secondwelfare}, \citep{andersonbook}, \citep{lawlarge}, \citep{purestrategy}, \citep{sun99}, \citep{khansun991}, \citep{khansun992}, \citep{rauh07}, \citep{indmatching}, \citep{anderson08}, \citep{xsun16}, \citep{duffie18}, \citep{stochasticmec} etc). \footnote{\citep{robeco}, \citep{ali74}, \citep{nsexchange}, \citep{alir75} and \citep{oligopoly} predated the Loeb measure construction. They relied on a careful analysis of the close relationship between the discrete and measure-theoretic properties of hyperfinite sets that the Loeb measure construction so perfectly captures. The arguments in these papers led, via a Loeb space argument, to a very general and completely elementary argument given in \citep{core1}.} These applications are supported by the development of mathematical infrastructures such as integration theory (see \citep{Loeb75} and \citep{andersonisrael}), representation of measures (see \citep{anderson87} and \citep{stincomb}) and Fubini theorem (see \citep{Keisler87}). \citet{loebsun} made an important contribution to the infrastructure, but left four open problems. We give complete solutions to the first two open problems as well as a partial solution to the third open problem. 

Given an internal probability space $(\Omega, \cF, \mu)$, its Loeb extension is defined to be the countably additive probability space $(\Omega, \Loeb{\cF}, \Loeb{\mu})$, where $\Loeb{\cF}$ consists of all sets $B\subset \Omega$ such that
\[
\sup\{\ST(\mu(A)): B\supset A\in \cF\}=\inf\{\ST(\mu(C)): B\subset C\in \cF\}
\] 
and $\Loeb{\mu}(B)$ is defined to be the above supremum. \citet{loebsun} introduced the following definition to compare two internal probability spaces. 

\begin{definition}\label{defloebext}
Let $\cM=(\Omega, \cF, \mu)$ and $\cN=(\Omega, \cG, \nu)$ be two internal probability spaces. 
We say $\cN$ \textbf{Loeb extends} $\cM$ if $\Loeb{\cG}\supset \Loeb{\cF}$ and $\Loeb{\nu}$ extends $\Loeb{\mu}$ as a function. 
We say $\cN$ is \textbf{Loeb equivalent to} $\cM$ if $\Loeb{\cF}=\Loeb{\cG}$ and $\Loeb{\nu}=\Loeb{\mu}$.  
\end{definition}

If $\cF\subset \cG$ and $\nu$ extends $\mu$ as a function, then it is clear that $\cN$ Loeb extends $\cM$. 
However, as pointed out by \citet{loebsun}, Loeb extensions and Loeb equivalence are much more difficult when it is not assumed that $\cF\subset \cG$. As a result, \citet{loebsun} have left four open problems on Loeb extensions and Loeb equivalences. 
We give complete/partial solutions to the first three open problems, which we list below. We say an internal probability space 
$(\Omega, \cF, \mu)$ is \textbf{hyperfinite} if $\cF$ is hyperfinite (As in \citep{loebsun}, we do not require $\Omega$ to be hyperfinite). 

\begin{question}\label{loebeqq1}
Suppose $\cN$ Loeb extends $\cM$. Must $\cN$ has an internal subspace that is Loeb equivalent to $\cM$? What if $\cM$ is assumed to be hyperfinite? 
\end{question} 

\begin{question}\label{loebeqq2}
Suppose $\cN$ Loeb extends $\cM$ and $\cN$ is hyperfinite. Must $\cM$ be Loeb equivalent to a hyperfinite probability space? 
\end{question}

\begin{question}\label{loebeqq3}
Suppose $\cM=(\Omega, \cF, \mu)$ is Loeb equivalent to $\cN=(\Omega, \cG, \nu)$, and $\cH$ be the internal algebra generated by $\cF\cup \cG$.
Must there be an internal probability measure $P$ on $\cH$ such that $\cM$ is Loeb equivalent to $(\Omega, \cH, P)$? What if $\cM$ and $\cN$ are assumed to be hyperfinite? 
\end{question}

\section{Counter-examples for the First Two Questions}
We start this section by introducing the following theorem which provides a useful characterization of Loeb extension. The first part of the theorem is cited from {\citep[][Lemma.~4.3]{loebsun}}.
\begin{theorem}\label{loebeqlemma}
Let $\cM=(\Omega, \cF, \mu)$ and $\cN=(\Omega, \cG, \nu)$ be two internal probability spaces. 
\begin{itemize}
\item $\cN$ Loeb extends $\cM$ if and only if for every $B\in \cF$ there exists $C\in \cG$ such that $B\subset C$ and $\nu(C)\approx \mu(B)$.

\item $\cN$ Loeb extends $\cM$ if and only if for every $B\in \cF$ there exists $C\in \cG$ such that $C\subset B$ and $\nu(C)\approx \mu(B)$.
\end{itemize}
\end{theorem}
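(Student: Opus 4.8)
The plan is to derive the second equivalence from the first one (the cited {\citep[][Lemma.~4.3]{loebsun}}) by a clean complementation argument, exploiting that both $\cF$ and $\cG$ are internal algebras and hence closed under complements, and that $\mu,\nu$ are finitely additive internal probability measures, so $\mu(\Omega\setminus B)=1-\mu(B)$ and $\nu(\Omega\setminus C)=1-\nu(C)$. The only analytic input needed beyond this is that infinite closeness is preserved under $x\mapsto 1-x$: if $\nu(C')\approx 1-\mu(B)$ then $1-\nu(C')\approx \mu(B)$, which is immediate since $x\approx y$ implies $1-x\approx 1-y$. I would prove each implication by dualizing the corresponding implication of the first bullet.

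For the forward direction, I would assume $\cN$ Loeb extends $\cM$ and fix $B\in\cF$. Since $\cF$ is an algebra, $\Omega\setminus B\in\cF$, so the first bullet supplies $C'\in\cG$ with $\Omega\setminus B\subset C'$ and $\nu(C')\approx\mu(\Omega\setminus B)=1-\mu(B)$. Setting $C\defas\Omega\setminus C'\in\cG$, taking complements reverses the inclusion to give $C\subset B$, while $\nu(C)=1-\nu(C')\approx\mu(B)$, which is exactly the inner-approximation condition. For the converse, I would assume the inner-approximation condition and fix $B\in\cF$; applying the hypothesis to $\Omega\setminus B\in\cF$ yields $C'\in\cG$ with $C'\subset\Omega\setminus B$ and $\nu(C')\approx 1-\mu(B)$. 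Putting $C\defas\Omega\setminus C'\in\cG$ again reverses the inclusion to give $B\subset C$, and $\nu(C)=1-\nu(C')\approx\mu(B)$. Thus the outer-approximation condition of the first bullet holds, so by that bullet $\cN$ Loeb extends $\cM$, closing the equivalence.

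There is no substantial obstacle here: the statement is essentially self-dual, and the whole argument is a single application of Lemma~4.3 of \citep{loebsun} to the complement $\Omega\setminus B$. The only points demanding a little care are bookkeeping ones—tracking that set inclusion is \emph{reversed} under complementation (so an outer approximation of $\Omega\setminus B$ becomes an inner approximation of $B$, and vice versa), and confirming that the infinitesimal error is unaffected when passing from a set to its complement, which is guaranteed by finite additivity together with the stability of $\approx$ under $x\mapsto 1-x$.
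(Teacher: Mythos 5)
Your proposal is correct and follows essentially the same route as the paper: both derive the second bullet from the cited Lemma~4.3 of \citep{loebsun} by applying the first bullet to the complement $\Omega\setminus B$ and using that complementation reverses inclusions while $\nu(\Omega\setminus C)=1-\nu(C)\approx 1-\mu(\Omega\setminus B)=\mu(B)$. The paper's proof is just a terser version of your complementation argument, so there is nothing to add.
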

\begin{proof} 
The first statement is cited directly from {\citep[][Lemma.~4.3]{loebsun}}. 
We give a proof of the second statement for completeness.  
Suppose $\cN$ Loeb extends $\cM$ and pick $B\in \cF$. 
Then there exists $C\in \cG$ such that $\Omega\setminus B\subset C$ and $\nu(C)\approx \mu(\Omega\setminus B)$. 
Thus, we have $\Omega\setminus C\subset B$ and $\nu(\Omega\setminus C)\approx \mu(B)$. 
Conversely, for every $B\in \cF$, there exists $C\in \cG$ such that $C\subset \Omega\setminus B$ and $\nu(C)\approx \mu(\Omega\setminus B)$. 
Hence, we have $B\subset \Omega\setminus C$ and $\nu(\Omega\setminus C)\approx \mu(B)$. 
By the first statement, we know that $\cN$ Loeb extends $\cM$. 
\end{proof} 

Suppose $\cN$ Loeb extends $\cM$. The following theorem gives a necessary condition that $\cN$ contains an internal subspace which is Loeb equivalent to $\cM$. 

\begin{theorem}\label{loebnecessary}
Suppose $\cN=(\Omega, \cG, \nu)$ has an internal subspace that is Loeb equivalent to $\cM=(\Omega, \cF, \mu)$. 
Then, for every $A\in \cF$, there exists $A'\in \cF$ such that $A'\subset B\subset A$ for some $B\in \cG$ and $\mu(A')\approx \mu(A)$. 
If $\cN$ is hyperfinite, then $B$ can be taken to be $\bigcup\{C\in \cG: C\subset A\}$. 
\end{theorem}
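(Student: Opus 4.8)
The plan is to exploit the symmetry inherent in Loeb equivalence. Write the given internal subspace as $\cN'=(\Omega,\cG',\nu')$, where $\cG'\subseteq\cG$ is an internal subalgebra and $\nu'=\nu\!\restriction\!\cG'$. Since $\cN'$ is Loeb equivalent to $\cM$ we have $\Loeb{\cF}=\Loeb{\cG'}$ and $\Loeb{\mu}=\Loeb{\nu'}$, so \emph{both} directions of Loeb extension hold: $\cN'$ Loeb extends $\cM$, and $\cM$ Loeb extends $\cN'$. The whole argument is then two applications of the second (inner-approximation) characterization in \cref{loebeqlemma}, one in each direction.

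First I would fix $A\in\cF$ and apply \cref{loebeqlemma} to the fact that $\cN'$ Loeb extends $\cM$: this yields some $D\in\cG'$ with $D\subset A$ and $\nu'(D)\approx\mu(A)$. This $D$ is the candidate ``sandwich'' set; note $D\in\cG'\subseteq\cG$ and $D\subset A$. Next I would apply the same inner-approximation characterization in the reverse direction, using that $\cM$ Loeb extends $\cN'$ with the roles of the two spaces interchanged, now to the set $D\in\cG'$. This produces $A'\in\cF$ with $A'\subset D$ and $\mu(A')\approx\nu'(D)$. Chaining the two approximations gives $\mu(A')\approx\nu'(D)\approx\mu(A)$, hence $\mu(A')\approx\mu(A)$, while by construction $A'\subset D\subset A$ with $A'\in\cF$ and $B:=D\in\cG$, which is precisely the required conclusion.

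For the hyperfinite refinement I would use that when $\cG$ is hyperfinite the collection $\{C\in\cG:C\subset A\}$ is internal and hyperfinite, so its union $B^{*}:=\bigcup\{C\in\cG:C\subset A\}$ again lies in $\cG$ and is the largest element of $\cG$ contained in $A$. Since the set $D$ produced above belongs to $\cG$ and satisfies $D\subset A$, maximality forces $D\subset B^{*}$, whence $A'\subset D\subset B^{*}\subset A$; thus the witnessing set $B$ may be taken to be $B^{*}$ without disturbing $\mu(A')\approx\mu(A)$.

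I do not expect a serious obstacle here, only two points requiring care. The first is keeping the approximation directions straight: both invocations of \cref{loebeqlemma} must use the inner (subset) half rather than the outer half, since we need the sets nested \emph{inside} $A$. The second is in the hyperfinite case, where the essential use of hyperfiniteness of $\cG$ is exactly to guarantee that $B^{*}$ is internal and therefore a genuine member of $\cG$ that dominates every $\cG$-subset of $A$; without this the maximal element need not exist.
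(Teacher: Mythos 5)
Your proposal is correct and follows essentially the same route as the paper's proof: Loeb equivalence of the internal subspace gives Loeb extension in both directions, and two applications of the inner-approximation half of \cref{loebeqlemma} produce the chain $A'\subset B\subset A$ with $\mu(A')\approx\nu(B)\approx\mu(A)$. Your treatment of the hyperfinite case (noting that $\bigcup\{C\in\cG: C\subset A\}$ is internal, lies in $\cG$, and dominates the witness $D$ by maximality) is a detail the paper leaves implicit, and it is handled correctly.
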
 
\begin{proof} 
Let $\cN'=(\Omega, \cG', \nu')$, where $\nu'$ is the restriction of $\nu$ to $\cG'$, be an internal subspace of $\cN$ that is Loeb equivalent to $\cM$. Pick $A\in \cF$.
By \cref{loebeqlemma}, there exists $B\in \cG$ such that $B\subset A$ and $\nu(B)\approx \mu(A)$. 
By \cref{loebeqlemma} again, we know that there exists $A'\in \cF$ such that $A'\subset B$ and $\mu(A')\approx \nu(B)$. 
\end{proof} 

\cref{loebeqq1} asks whether the converse of \cref{loebnecessary} is true. We provide a counter-example below. 

%

\begin{example}\label{example1}
Let $N=\frac{1}{K!}$ for some $K\in \NSE{\Nats}\setminus \Nats$ and let $\Omega=\{\frac{1}{N}, \frac{2}{N}, \dotsc, 1\}$.
Then $\Omega$ includes the set of all rational numbers in $[0, 1]$ as a subset. 
Let $P$ denote the uniform hyperfinite probability measure on $\Omega$, that is, $P(\{\omega\})=\frac{1}{N}$ for every $\omega\in \Omega$. 
Let 
\begin{align*}
\cF=\{\emptyset, \Omega, \{\frac{1}{N}, \frac{2}{N},\dotsc, \frac{1}{2}\}, \{\frac{1}{2}+\frac{1}{N}, \dotsc, 1\}\}
\end{align*}
and let $\cG$ be the internal algebra generated by
\begin{align*}
\{\{\frac{1}{N},\frac{2}{N},\dotsc, \frac{1}{2}-\frac{2}{N}\}, \{\frac{1}{2}-\frac{1}{N},\frac{1}{2}, \frac{1}{2}+\frac{1}{N}\}, \{\frac{1}{2}+\frac{2}{N},\dotsc, 1\}\}.
\end{align*}
Let $\mu$ and $\nu$ be restrictions of $P$ on $\cF$ and $\cG$, respectively. Finally, let $\cM=(\Omega, \cF, \mu)$ and $\cN=(\Omega, \cG, \nu)$. By \cref{loebeqlemma}, it is clear that $\cN$ Loeb extends $\cM$. On the other hand, the Loeb $\sigma$-algebra $\Loeb{\cF}$ generated from $\cF$ is the same as $\cF$. Thus, $\Loeb{\cF}$ does not contain any element in $\{\{\frac{1}{N},\frac{2}{N},\dotsc, \frac{1}{2}-\frac{2}{N}\}, \{\frac{1}{2}-\frac{1}{N},\frac{1}{2}, \frac{1}{2}+\frac{1}{N}\}, \{\frac{1}{2}+\frac{2}{N},\dotsc, 1\}\}$, hence $\cM$ is not Loeb equivalent to any internal subset of $\cN$. 
\end{example}

Since the answer to \cref{loebeqq1} is negative, we now turn our attention to \cref{loebeqq2}. 
The following example shows that the answer to \cref{loebeqq2} is also negative. 

%
%

\begin{example}\label{example2}
Let $N$ be an element of $\NSE{\Nats}\setminus \Nats$ and let $\Omega=\NSE{[0,1]}$. 
Let $\lambda$ denote the Lebesgue measure on $[0,1]$. 
Let $\cF$ be the internal algebra generated by $\NSE{[0,\frac{1}{2})}, \NSE{[\frac{1}{2}, 1]}$ and all $\{a\}$ for $a\in \NSE{[\frac{1}{2}-\frac{1}{N}, \frac{1}{2}+\frac{1}{N}]}$. 
Clearly, $\cF$ is not hyperfinite.
Moreover, a subset of $\NSE{[\frac{1}{2}-\frac{1}{N}, \frac{1}{2}+\frac{1}{N}]}$ is an element of $\cF$ if and only if it is hyperfinite. 
Let $\cG$ be the internal algebra generated by $\{\NSE{[0, \frac{1}{2}-\frac{1}{N})}, \NSE{[\frac{1}{2}-\frac{1}{N}, \frac{1}{2}+\frac{1}{N}]}, \NSE{(\frac{1}{2}+\frac{1}{N}, 1]}\}$. 
Let $\mu$ and $\nu$ be the restrictions of $\NSE{\lambda}$ on $\cF$ and $\cG$, respectively. 
Finally, let $\cM=(\Omega, \cF, \mu)$ and $\cN=(\Omega, \cG, \nu)$. 
It is clear that $\cN$ is hyperfinite and, by \cref{loebeqlemma}, $\cN$ Loeb extends $\cM$. 
Moreover, we have the following lemma: 
\begin{lemma}\label{loebhyperfinite}
For every internal set $F\in \Loeb{\cF}$,  $\Loeb{\mu}(F)=0$ if and only if $F$ is hyperfinite. 
\end{lemma}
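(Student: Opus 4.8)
The plan is to reduce everything to the way $\cF$ resolves the three regions $L\defas\NSE{[0,\frac12-\frac1N)}$, $I\defas\NSE{[\frac12-\frac1N,\frac12+\frac1N]}$, and $R\defas\NSE{(\frac12+\frac1N,1]}$, which partition $\Omega$. The single structural fact I would establish first is that the two outer blocks are \emph{indivisible atoms} of $\cF$: every $B\in\cF$ satisfies $B\cap L\in\{\emptyset,L\}$ and $B\cap R\in\{\emptyset,R\}$. To see this I would note that each generator of $\cF$ (namely $\NSE{[0,\frac12)}$, $\NSE{[\frac12,1]}$, and each singleton $\{a\}$ with $a\in I$) meets $L$ in either $\emptyset$ or $L$, and likewise for $R$; the family $\cA\defas\{B:B\cap L\in\{\emptyset,L\}\text{ and }B\cap R\in\{\emptyset,R\}\}$ is an internal algebra (it is internally definable and closed under complements and finite unions), so it contains the internal algebra $\cF$ generated by those generators.

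Next I would record the two measure facts that drive the argument: $\ST(\mu(L))=\ST(\mu(R))=\frac12$ (indeed $\mu(L)=\mu(R)=\frac12-\frac1N$), and every hyperfinite internal subset of $\Omega$ has $\NSE{\lambda}$-measure $0$, by transfer of the statement that finite sets of reals are Lebesgue-null. In particular $L$ and $R$ are \emph{not} hyperfinite, since they carry noninfinitesimal measure, and any internal subset of a hyperfinite set is hyperfinite.

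With these in hand the two directions are short. For the ``if'' direction, suppose $F\in\Loeb{\cF}$ is internal and hyperfinite. Any $A\in\cF$ with $A\subseteq F$ must have $A\cap L=\emptyset$: otherwise $A\cap L=L$, forcing $L\subseteq F$, which is impossible since $F$ is hyperfinite while $L$ is not. Likewise $A\cap R=\emptyset$, so $A\subseteq I$, whence $A$ is hyperfinite and $\mu(A)=0$. Thus the inner Loeb value $\sup\{\ST(\mu(A)):A\subseteq F,\ A\in\cF\}$ equals $0$, and since $F\in\Loeb{\cF}$ this value equals $\Loeb{\mu}(F)$, giving $\Loeb{\mu}(F)=0$. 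For the ``only if'' direction, suppose $F\in\Loeb{\cF}$ is internal with $\Loeb{\mu}(F)=0$. Because $\Loeb{\mu}(F)$ equals the outer value $\inf\{\ST(\mu(C)):F\subseteq C\in\cF\}$, there is $C\in\cF$ with $F\subseteq C$ and $\ST(\mu(C))<\frac12$; then $C\cap L=\emptyset$ (else $\mu(C)\ge\mu(L)\approx\frac12$) and similarly $C\cap R=\emptyset$, so $C\subseteq I$. By the fact (already noted in this example) that a subset of $I$ lies in $\cF$ if and only if it is hyperfinite, $C$ is hyperfinite, and therefore so is its subset $F$.

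The main obstacle, and essentially the only place requiring care, is the first step: justifying that $\cF$ cannot split $L$ or $R$. Once the block structure is in place, each direction is a one-line measure estimate, so I would concentrate effort on making the internal-algebra argument airtight (or, equivalently, on cleanly transferring the corresponding standard statement about the finite/cofinite-type algebra generated by two half-intervals together with finitely many singletons).
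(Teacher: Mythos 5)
Your proof is correct, and its skeleton is the same as the paper's: the ``if'' direction reduces to the fact that internal subsets of a hyperfinite set are hyperfinite and hence $\NSE{\lambda}$-null, while the ``only if'' direction produces a cover $C\in\cF$ of $F$ of small measure and argues from the structure of $\cF$ that $C$, and hence $F$, is hyperfinite. Where you genuinely differ is in how the two halves of the ``only if'' step are justified, and both differences make the argument more self-contained. The paper first invokes the existence of $F'\in\cF$ with $F\subseteq F'$ and $\mu(F')\approx 0$, a standard Loeb-measure fact whose proof itself requires saturation or an internal-infimum argument; you avoid this entirely by taking any $C$ with $\ST(\mu(C))<\frac{1}{2}$, which is immediate from the definition of $\Loeb{\mu}$ as the outer infimum. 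The paper then simply asserts that ``by the construction of $\cF$'' such an $F'$ is hyperfinite; you actually prove the structural fact being used, by exhibiting the internal algebra $\cA$ of sets that do not split the outer blocks $L=\NSE{[0,\frac{1}{2}-\frac{1}{N})}$ and $R=\NSE{(\frac{1}{2}+\frac{1}{N},1]}$ (internal by the internal definition principle, closed under complements and unions, containing the generators, hence containing $\cF$). Two minor points. First, $L$ and $R$ are not elements of $\cF$, so $\mu(L)$ and $\mu(R)$ are strictly speaking undefined; write $\NSE{\lambda}(L)$ and $\NSE{\lambda}(R)$ instead, and obtain $\mu(C)=\NSE{\lambda}(C)\geq\NSE{\lambda}(L)\approx\frac{1}{2}$ from monotonicity of $\NSE{\lambda}$ when $L\subseteq C$. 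Second, in the ``if'' direction the detour through $L$ and $R$ is unnecessary: any $A\in\cF$ with $A\subseteq F$ is an internal subset of a hyperfinite set, hence hyperfinite, so $\mu(A)=0$ at once.
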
 
\begin{proof}
Pick $F\in \Loeb{\cF}$.
Clearly, if $F$ is hyperfinite, then $\Loeb{\mu}(F)=0$. 
If $\Loeb{\mu}(F)=0$, then there must exist $F'\in \cF$ such that $F\subset F'$ and $\mu(F')\approx 0$. 
By the construction of $\cF$, $F'$ must be hyperfinite.
As $F$ is internal, $F$ must be hyperfinite. 
\end{proof}

We now show that $\cM$ is not Loeb equivalent to any hyperfinite probability space. 
Suppose not. Let $\cN'=(\Omega, \cG', P)$ be a hyperfinite probability space that is Loeb equivalent to $\cM$. 
For every $a\in \NSE{[\frac{1}{2}-\frac{1}{N}, \frac{1}{2}+\frac{1}{N}]}$, by \cref{loebeqlemma}, there exists $A_{a}\in \cG'$ such that $a\in A_{a}$ and $P(A_{a})\approx 0$. 
Pick $n\in \Nats$ and let 
\begin{align*}
\cA_{n}=\{A\in \cG': (P(A)<\frac{1}{n})\wedge (a\in A\ \text{for *infinitely many}\ a\in \NSE{[\frac{1}{2}-\frac{1}{N}, \frac{1}{2}+\frac{1}{N}]})\}
\end{align*}
Note that there are *infinitely many $a\in \NSE{[\frac{1}{2}-\frac{1}{N}, \frac{1}{2}+\frac{1}{N}]}$ and hyperfinitely many $A\in \cG'$ with $P(A)<\frac{1}{n}$. As each $a\in \NSE{[\frac{1}{2}-\frac{1}{N}, \frac{1}{2}+\frac{1}{N}]}$ must be contained in some $A\in \cG'$ such that $P(A)<\frac{1}{n}$, by the transfer of the pigeonhole principle, $\cA_{n}$ is non-empty for every $n\in \Nats$. 
By the saturation principle, there exists an internal $A_0$ such that $P(A_0)\approx 0$ and $A_0$ contains *infinitely many $a\in \NSE{[\frac{1}{2}-\frac{1}{N}, \frac{1}{2}+\frac{1}{N}]}$. 
As $\cN'$ is Loeb equivalent to $\cM$, we know that $\Loeb{\mu}(A_0)=0$. 
This, however, contradicts \cref{loebhyperfinite}, hence we conclude that $\cM$ is not Loeb equivalent to any hyperfinite probability space. 
\end{example}

In summary, both \cref{loebeqq1} and \cref{loebeqq2} have negative answers. 
In general, if $\cN$ Loeb extends $\cM$, it does not imply $\cM$ is equivalent to some subset of $\cN$.

\section{Partial Solution to the Third Question}
In this section, we give a partial answer to \cref{loebeqq3}. In particular, we reduce \cref{loebeqq3} to the following question: for hyperfinite spaces, is the internal algebra generated by the union of two Loeb equivalent internal algebras a subset of the Loeb extension of any one of the internal algebra? 

Let $(\Omega, \cF, \mu)$ be an internal probability space and let $\cG$ be another internal algebra on $\Omega$. The following theorem shows that it is possible to define an internal measure $\nu$ on $(\Omega, \cG)$ such that $(\Omega, \cF, \mu)$ Loeb extends $(\Omega, \cG, \nu)$ if and only if $\cG\subset \Loeb{\cF}$.

\begin{theorem}\label{newext}
Suppose $\cM=(\Omega, \cF, \mu)$ be a hyperfinite probability space and let $\cG$ be a hyperfinite algebra on $\Omega$.
Then $(\Omega, \cF, \mu)$ Loeb extends $(\Omega, \cG, P)$ for some internal probability measure $P$ if and only if $\cG\subset \Loeb{\cF}$.  
\end{theorem}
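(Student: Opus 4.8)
The plan is to treat the two implications separately; the forward implication is immediate and essentially all the difficulty lies in the converse. For the forward direction, assume $(\Omega,\cF,\mu)$ Loeb extends $(\Omega,\cG,P)$ for some internal probability measure $P$. By \cref{defloebext} this means $\Loeb{\cF}\supseteq\Loeb{\cG}$. Since every internal algebra satisfies $\cG\subseteq\Loeb{\cG}$ (each $A\in\cG$ approximates itself from inside and outside), I immediately conclude $\cG\subseteq\Loeb{\cG}\subseteq\Loeb{\cF}$. No hyperfiniteness is used here.

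For the converse, assume $\cG\subseteq\Loeb{\cF}$. My first step is to reduce the problem, via \cref{loebeqlemma}, to a purely measure-theoretic task. By the first bullet of \cref{loebeqlemma}, applied with $(\Omega,\cG,P)$ in the role of the extended space and $(\Omega,\cF,\mu)$ as the extender, the space $(\Omega,\cF,\mu)$ Loeb extends $(\Omega,\cG,P)$ as soon as for every $B\in\cG$ there is $C\in\cF$ with $B\subseteq C$ and $\mu(C)\approx P(B)$. For $B\in\cG$ write $\underline{B}$ and $\overline{B}$ for the largest $\cF$-set contained in $B$ and the smallest $\cF$-set containing $B$; these exist and are internal because $\cF$ is hyperfinite, being the unions of $\cF$-atoms selected by an internal formula. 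The hypothesis $B\in\Loeb{\cF}$ says precisely that $\ST(\mu(\underline{B}))=\ST(\mu(\overline{B}))=\Loeb{\mu}(B)$. Hence it suffices to construct an internal probability measure $P$ on $\cG$ with $P(B)\approx\Loeb{\mu}(B)$ for every $B\in\cG$: one then takes $C=\overline{B}$, for which $B\subseteq C$ and $\mu(\overline{B})\approx\Loeb{\mu}(B)\approx P(B)$.

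The construction of $P$ is the heart of the argument, and the main obstacle is to produce an internal, finitely additive $P$ whose standard part matches $\Loeb{\mu}$ on all of $\cG$ rather than merely on the atoms of $\cG$ (na\"ive normalisation of the atom masses does not work, since $\ST$ does not distribute over hyperfinite sums). I will obtain $P$ as a pushforward of $\mu$. Using that $\cG$ is hyperfinite, fix an internal enumeration $D_{1},\dots,D_{M}$ of its atoms and define an internal assignment $\psi$ sending each atom $F$ of $\cF$ to $D_{i}$, where $i=\min\{k: F\cap D_{k}\neq\emptyset\}$; this is internal and well defined. Let $\Psi\colon\cG\to\cF$ be the induced map $\Psi(B)=\bigcup\{F: F \text{ an atom of }\cF,\ \psi(F)\subseteq B\}$, and set $P=\mu\circ\Psi$. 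Because $\Psi$ carries disjoint unions to disjoint unions and $\Psi(\Omega)=\Omega$, the function $P$ is an internal finitely additive probability measure on $\cG$.

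It then remains to verify $P(B)\approx\Loeb{\mu}(B)$, which follows from the inclusions $\underline{B}\subseteq\Psi(B)\subseteq\overline{B}$. Indeed, if an $\cF$-atom $F$ lies in $B$ then every $\cG$-atom meeting $F$ lies in $B$, so $\psi(F)\subseteq B$; and if $F\cap B=\emptyset$ then no $\cG$-atom meeting $F$ lies in $B$, so $\psi(F)\not\subseteq B$. Monotonicity of $\mu$ gives $\mu(\underline{B})\le P(B)\le\mu(\overline{B})$, and passing to standard parts yields $\ST(P(B))=\Loeb{\mu}(B)$ by the reduction above, completing the converse and hence the theorem. The only points requiring care are the internality of $\psi$, $\Psi$, and of $\underline{B},\overline{B}$, which is precisely where both hyperfiniteness hypotheses are consumed.
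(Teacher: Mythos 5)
Your proof is correct, and its skeleton matches the paper's: the forward direction is the same observation, and for the converse both arguments produce an internal probability measure $P$ on $\cG$ that is sandwiched, for each $B\in\cG$, between the largest $\cF$-set $\underline{B}\subseteq B$ and the smallest $\cF$-set $\overline{B}\supseteq B$, so that $\cG\subseteq\Loeb{\cF}$ forces $P(B)\approx\mu(\overline{B})\approx\Loeb{\mu}(B)$, and \cref{loebeqlemma} concludes. The genuine difference is the construction of $P$. The paper forms the common refinement $\cU=\{A\cap B: A\in\cF_0,\ B\in\cG_0,\ A\cap B\neq\emptyset\}$ of the two atom partitions, gives each cell inside an $\cF$-atom $F$ the mass $\mu(F)/|\cU_{F}|$, and defines $P$ by hyperfinite summation, which then requires \cref{keyclaim} together with a separate additivity claim to control those sums. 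You instead internally select a single $\cG$-atom $\psi(F)$ meeting each $\cF$-atom $F$ (least index in an internal enumeration) and push $\mu$ forward, $P=\mu\circ\Psi$. Your route is leaner: additivity of $P$ is immediate because $\Psi$ carries disjoint unions to disjoint unions, and the inclusions $\underline{B}\subseteq\Psi(B)\subseteq\overline{B}$, which you justify correctly via the dichotomy that a $\cG$-atom is either contained in or disjoint from any set of $\cG$, give the estimate by monotonicity of $\mu$ alone, with no hyperfinite sums or standard-part bookkeeping. What the paper's refinement construction buys in exchange is canonicity (no arbitrary choice of enumeration) and reuse: the same $\cU$-machinery is exactly what the final section runs on (\cref{immthm} and its successors), so the paper amortizes the setup cost. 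Both proofs consume the two hyperfiniteness hypotheses at the same points: existence of the internal atom partitions and the fact that internal unions of atoms remain in the respective algebras.
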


\begin{proof}
Suppose, for some internal probability measure $P$, $(\Omega, \cF, \mu)$ Loeb extends $(\Omega, \cG, P)$. 
Then we have $\Loeb{\cG}\subset \Loeb{\cF}$ which implies that $\cG\subset \Loeb{\cF}$. 

Now suppose $\cG\subset \Loeb{\cF}$. 
As $\cF$ is hyperfinite, by the transfer principle, there exists an internal subset $\cF_0$ of $\cF$ such that
\begin{enumerate}
\item $\cF_0$ is a $\NSE{}$partition of $\Omega$
\item For every $A\in \cF_0$, if there exists non-empty $E\in \cF$ such that $E\subset A$, then $E=A$.
\end{enumerate}
Then, for any $F\in \cF$, it is a hyperfinite union of elements from $\cF_0$. Thus, $\cF_0$ generates the internal algebra $\cF$. 
Similarly, there exists an internal subset $\cG_0$ of $\cG$ such that
\begin{enumerate}
\item $\cG_0$ is a $\NSE{}$partition of $\Omega$
\item For every $A\in \cG_0$, if there exists non-empty $E\in \cG$ such that $E\subset A$, then $E=A$.
\end{enumerate}
Let $\cU=\{A\cap B: A\in \cF_0, B\in \cG_0, A\cap B\neq \emptyset\}$. 
It is easy to see that $\cU$ forms a $\NSE{}$partition of $\Omega$ and every element in $\cF\cup \cG$ can be written as a hyperfinite union of elements in $\cU$. For $F\in \cF_0$, let $\cU_{F}=\{F\cap B: B\in \cG_0, B\cap F\neq \emptyset\}$. Then $\cU=\bigcup_{F\in \cF_0}\cU_{F}$ and $\cU_{F}$ is hyperfinite for every $F\in \cF_0$. We now define a function $P': \cU\to \NSE{[0,1]}$. For every $U\in \cU$, $U\in \cU_{F_0}$ for exactly one $F_0\in \cF_0$, let $P'(U)=\frac{\mu(F_0)}{|\cU_{F_0}|}$ where $|\cU_{F_0}|$ denotes the internal cardinality of $\cU_{F_0}$. 
\begin{claim}\label{keyclaim}
For every $A\in \cF$, $\mu(A)=\sum_{U\in \cU, U\subset A}P'(U)$.
\end{claim}
\begin{proof} 
Pick $A\in \cF$. 
Let $\cF_{A}=\{F\in \cF_0: F\subset A\}$. 
For every $U\in \cU$ such that $U\subset A$, it is an element of exactly one element in $\cF_{A}$. 
Moreover, as $\cU$ forms a $\NSE{}$partition of $\Omega$, an element in $\cU$ is either a subset of $A$ or disjoint from $A$. 
Thus, we have 
\begin{align*}
\mu(A)=\sum_{F\in \cF_{A}}\mu(F)=\sum_{F\in \cF_{A}}\sum_{U\in \cU, U\subset F}\frac{\mu(F)}{|\cU_{F}|}=\sum_{U\in \cU, U\subset A}P'(U).
\end{align*}
\end{proof}

Define $P: \cG\to \NSE{[0,1]}$ by letting $P(G)=\sum_{U\in \cU, U\subset G}P'(U)$. 
\begin{claim} 
$P$ is an internal probability measure on $(\Omega, \cG)$.
\end{claim}
\begin{proof}
Clearly we have $P(\emptyset)=0$ and $P(\Omega)=1$. 
Let $G_1, G_2\in \cG$ be two disjoint sets. 
Let $U_0\subset G_1\cup G_2$ be an element of $\cU$. 
As $\cU$ forms a $\NSE{}$partition of $\Omega$ and both $G_1$ and $G_2$ can be written as a hyperfinite union of elements in $\cU$, 
we can conclude that $U_0$ is either a subset of $G_1$ or a subset of $G_2$. 
Thus, we have 
\[
P(G_1\cup G_2)&=\sum_{U\in \cU, U\subset G_1\cup G_2}P'(U)\\
&=\sum_{V\in \cU, V\subset G_1}P'(V)+\sum_{E\in \cU, E\subset G_2}P'(E)\\
&=P(G_1)+P(G_2). 
\]
\end{proof}

We now show that $(\Omega, \cF, \mu)$ Loeb extends $(\Omega, \cG, P)$. 
Pick $G\in \cG$. Let $G_i=\bigcup\{F\in \cF: F\subset G\}$ and let $G_o=\bigcap\{F\in \cF: G\subset F\}$. 
As $\cF$ is hyperfinite, both $G_i$ and $G_o$ are elements of $\cF$. 
Moreover, as $\cG\subset \Loeb{\cF}$, we have $\mu(G_i)\approx \mu(G_o)$, which implies that $\Loeb{\mu}(G)=\ST(\mu(G_i))$. 
As $G_i\subset G\subset G_o$, we have $\sum_{U\in \cU, U\subset G_i}P'(U)\leq P(G)\leq \sum_{U\in \cU, U\subset G_o}P'(U)$.
By \cref{keyclaim}, we have $\sum_{U\in \cU, U\subset G_i}P'(U)=\mu(G_i)$ and $\sum_{U\in \cU, U\subset G_o}P'(U)=\mu(G_o)$.
Thus, we can conclude that $\Loeb{P}(G)=\ST(P(G))=\ST(\mu(G_i))=\Loeb{\mu}(G)$, completing the proof. 
\end{proof}

The following theorem gives a partial answer to \cref{loebeqq3}. 

\begin{theorem}\label{unioneq}
Let $(\Omega, \cF, \mu)$ be a hyperfinite probability space and let $\cG$ be a hyperfinite algebra on $\Omega$. 
Let $\cH$ be the internal algebra generated by $\cF\cup \cG$. 
Then $(\Omega, \cH, P)$ is Loeb equivalent to $(\Omega, \cF, \mu)$ for some internal probability measure $P$ if and only if $\cH\subset \Loeb{\cF}$. 
\end{theorem}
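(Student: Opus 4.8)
The plan is to prove the ``only if'' direction straight from the definition of Loeb equivalence, and to obtain the ``if'' direction by feeding $\cH$ into \cref{newext} and then upgrading the resulting one-sided Loeb extension into a genuine Loeb equivalence. For the ``only if'' direction, suppose $(\Omega, \cH, P)$ is Loeb equivalent to $(\Omega, \cF, \mu)$, so $\Loeb{\cH}=\Loeb{\cF}$. Every internal $B\in \cH$ satisfies $\sup\{\ST(P(A)): B\supset A\in \cH\}=\ST(P(B))=\inf\{\ST(P(C)): B\subset C\in \cH\}$, so $B\in \Loeb{\cH}$; hence $\cH\subset \Loeb{\cH}=\Loeb{\cF}$, which is exactly the desired conclusion.

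For the ``if'' direction, assume $\cH\subset \Loeb{\cF}$. First I would record that $\cH$ is hyperfinite: taking atom-partitions $\cF_0$ and $\cG_0$ of $\cF$ and $\cG$ as in the proof of \cref{newext}, their common refinement $\cU=\{A\cap B: A\in \cF_0, B\in \cG_0, A\cap B\neq \emptyset\}$ is a hyperfinite $\NSE{}$partition whose generated algebra contains $\cF\cup \cG$ and is contained in $\cH$, so $\cH$ equals the algebra generated by $\cU$ and is hyperfinite. I would then apply \cref{newext} with $\cH$ playing the role of $\cG$: since $\cH$ is hyperfinite and $\cH\subset \Loeb{\cF}$, there is an internal probability measure $P$ on $\cH$ for which $(\Omega, \cF, \mu)$ Loeb extends $(\Omega, \cH, P)$. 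In particular $\Loeb{\cH}\subset \Loeb{\cF}$ and $\Loeb{\mu}$ extends $\Loeb{P}$ as a function.

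It remains to produce the reverse Loeb extension. The key observation is that $\cF\subset \cH$, so every $B\in \cF$ lies in $\cH\subset \Loeb{\cH}$; applying the relation ``$\Loeb{\mu}$ extends $\Loeb{P}$'' to such $B$ gives $\ST(\mu(B))=\Loeb{\mu}(B)=\Loeb{P}(B)=\ST(P(B))$, that is, $P(B)\approx \mu(B)$ for every $B\in \cF$. Then for any $B\in \cF$ the set $C=B$ itself belongs to $\cH$, satisfies $B\subset C$, and has $P(C)\approx \mu(B)$; by \cref{loebeqlemma} this shows $(\Omega, \cH, P)$ Loeb extends $(\Omega, \cF, \mu)$, whence $\Loeb{\cF}\subset \Loeb{\cH}$ and $\Loeb{P}$ extends $\Loeb{\mu}$. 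Combining the two extensions yields $\Loeb{\cF}=\Loeb{\cH}$ and $\Loeb{\mu}=\Loeb{P}$, i.e.\ Loeb equivalence.

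I expect the only delicate point to be this reverse extension. The tempting route is to arrange $P|_{\cF}=\mu$ exactly, which does hold, since the atoms of $\cH$ refine those of $\cF$, so the refinement $\cU$ coincides with the atoms of $\cH$ and \cref{keyclaim} gives $P(F)=\mu(F)$ for $F\in \cF$. But this exact equality is unnecessary: mere infinitesimal closeness $P(B)\approx \mu(B)$ already suffices through \cref{loebeqlemma}. The one step requiring care is invoking the correct half of \cref{loebeqlemma}, taking $C=B$ to witness the extension.
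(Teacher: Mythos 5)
Your proposal is correct and takes essentially the same route as the paper: the ``only if'' direction straight from $\Loeb{\cH}=\Loeb{\cF}$, and the ``if'' direction by applying \cref{newext} to $\cH$, deducing $P(B)\approx\mu(B)$ for $B\in\cF$ from $\cF\subset\cH$, and then invoking \cref{loebeqlemma} with $C=B$ to get the reverse Loeb extension. The only addition is your explicit check that $\cH$ is hyperfinite (via the common refinement $\cU$) before invoking \cref{newext} --- a hypothesis the paper uses but leaves implicit --- which is a worthwhile clarification rather than a deviation.
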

\begin{proof} 
Suppose there exists an internal probability measure $P$ such that $(\Omega, \cH, P)$ is Loeb equivalent to $(\Omega, \cF, \mu)$. 
Then we have $\Loeb{\cH}=\Loeb{\cF}$ which implies that $\cH\subset \Loeb{\cF}$. 

Now suppose $\cH\subset \Loeb{\cF}$. 
By \cref{newext}, there exists an internal probability measure $P$ on $(\Omega, \cH)$ such that $(\Omega, \cF, \mu)$ Loeb extends 
$(\Omega, \cH, P)$. Thus, we have $P(F)\approx \mu(F)$ for every $F\in \cF$. By \cref{loebeqlemma}, $(\Omega, \cH, P)$ Loeb extends $(\Omega, \cF, \mu)$ and we have the desired result. 
\end{proof}

It is natural to ask if \cref{unioneq} remains valid without the hyperfinite assumption. 
\begin{open problem}
Let $\cM=(\Omega, \cF, \mu)$ and $\cN=(\Omega, \cG, \nu)$ be two internal probability spaces that are not hyperfinite.
Let $\cH$ be the internal algebra generated by $\cF\cup \cG$.
Suppose $\cM$ is Loeb equivalent to $\cN$, and $\cH\subset \Loeb{\cF}$.
Must there be an internal probability measure $P$ on $\cH$ such that $\cM$ is Loeb equivalent to $(\Omega, \cH, P)$?
\end{open problem}

\section{Loeb Measurability of $\cH$}
We have shown in previous section that \cref{loebeqq3} is equivalent to the following fundamental problem in Loeb measure theory:
\begin{open problem}\label{posloebquestion}
Let $(\Omega, \cF, \mu)$ be a hyperfinite probability space and let $\cG$ be another hyperfinite algebra on $\Omega$ such that $\cG\subset \Loeb{\cF}$. Let $\cH$ be the hyperfinite algebra generated by $\cF\cup \cG$. Is $\cH\subset \Loeb{\cF}$? 
\end{open problem}
We suspect the answer to \cref{posloebquestion} is negative. However, in this section, we provide a few sufficient conditions that lead to positive answer to \cref{posloebquestion}. 
Let $\cM=(\Omega, \cF, \mu)$ and $\cN=\{\Omega, \cG, \nu\}$ be two hyperfinite probability spaces.
We use the same notation as in the proof of \cref{newext} 
Throughout this section, let $\cF_0$ be an internal subset of $\cF$ such that 
\begin{enumerate}
\item $\cF_0$ is a $\NSE{}$partition of $\Omega$
\item For every $A\in \cF_0$, if there exists non-empty $E\in \cF$ such that $E\subset A$, then $E=A$.
\end{enumerate}
Similarly, let $\cG_0$ be an internal subset of $\cG$ such that
\begin{enumerate}
\item $\cG_0$ is a $\NSE{}$partition of $\Omega$
\item For every $A\in \cG_0$, if there exists non-empty $E\in \cG$ such that $E\subset A$, then $E=A$.
\end{enumerate}
Let $\cU=\{A\cap B: A\in \cF_0, B\in \cG_0, A\cap B\neq \emptyset\}$. Then $\cU$ forms a $\NSE{}$partition of $\Omega$ and every element in $\cH$ can be written as a hyperfinite union of elements in $\cU$. It follows from the Loeb measurability of $\cG$ that $\cU\subset \Loeb{\cF}$. However, it is not clear whether all hyperfinite (not finite) unions of elements in $\cU$ is an element of $\Loeb{\cF}$. Hence it is not straightforward to determine the Loeb measurability of $\cH$. Finally, let $\cF_0'=\{F\in \cF_0: \text{$F$ intersects at least two elements in $\cG_0$}\}$. 

\begin{theorem}\label{immthm}
Suppose $\cG\subset \Loeb{\cF}$ and $\mu(\bigcup \cF_0')\approx 0$. 
Then $\cH\subset \Loeb{\cF}$. 
\end{theorem}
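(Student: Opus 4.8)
The plan is to take an arbitrary $H\in\cH$, write it as a hyperfinite union of atoms from $\cU$, and split $\Omega$ along the partition $\cF_0=(\cF_0\setminus\cF_0')\cup\cF_0'$. The whole point of the hypothesis $\mu(\bigcup\cF_0')\approx 0$ is that the region where the $\cG_0$-partition genuinely refines $\cF_0$ --- namely $\bigcup\cF_0'$ --- is $\mu$-null, so it cannot obstruct Loeb measurability; on the complementary region the two partitions agree and nothing needs checking beyond membership in $\cF$. Concretely, I would first record the decomposition
$$H=\bigl(H\cap\bigcup(\cF_0\setminus\cF_0')\bigr)\cup\bigl(H\cap\bigcup\cF_0'\bigr),$$
noting that $\bigcup(\cF_0\setminus\cF_0')$ and $\bigcup\cF_0'$ both lie in $\cF$, being hyperfinite unions of atoms of $\cF_0$. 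Since $\Loeb{\cF}$ is a $\sigma$-algebra, it then suffices to place each of the two pieces in $\Loeb{\cF}$.

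For the first piece, the key observation is that each $F\in\cF_0\setminus\cF_0'$ meets \emph{exactly} one atom of $\cG_0$: it meets at least one because $\cG_0$ is a $\NSE{}$partition and $F\neq\emptyset$, and at most one because $F\notin\cF_0'$. As $\cG_0$ is a partition this forces $F\subset B$ for that atom $B$, whence $F\cap B=F$ and therefore $F\in\cU$. Thus every atom of $\cF_0$ outside $\cF_0'$ is already an atom of $\cU$. Because $H$ is a hyperfinite union of $\cU$-atoms and $\cU$ is a $\NSE{}$partition, each such $F$ satisfies either $F\subset H$ or $F\cap H=\emptyset$. Consequently $H\cap\bigcup(\cF_0\setminus\cF_0')=\bigcup\{F\in\cF_0\setminus\cF_0':F\subset H\}$, an internally-defined, hence hyperfinite, union of atoms of $\cF_0$; so it lies in $\cF\subset\Loeb{\cF}$.

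For the second piece I would use that $H\cap\bigcup\cF_0'$ is internal and contained in $\bigcup\cF_0'\in\cF$ with $\ST(\mu(\bigcup\cF_0'))=0$. Taking the inner approximant $\emptyset$ and the outer approximant $\bigcup\cF_0'$ in the definition of the Loeb extension shows that the inner and outer standard measures of $H\cap\bigcup\cF_0'$ both equal $0$, so it lies in $\Loeb{\cF}$ with $\Loeb{\mu}$-measure $0$. Combining the two pieces yields $H\in\Loeb{\cF}$, and since $H\in\cH$ was arbitrary, $\cH\subset\Loeb{\cF}$.

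I do not expect a serious obstacle here. The genuinely delicate region flagged just before the theorem --- arbitrary hyperfinite unions of the strictly finer $\cU$-atoms sitting inside the $F\in\cF_0'$ --- is precisely what the hypothesis $\mu(\bigcup\cF_0')\approx 0$ sweeps into a null set, after which it is handled trivially. The only point needing any care is confirming that $\{F\in\cF_0\setminus\cF_0':F\subset H\}$ is internal, so that its union is an element of $\cF$ and not merely of $\Loeb{\cF}$; this follows from internal comprehension applied to the internal objects $\cF_0$, $\cF_0'$, and $H$.
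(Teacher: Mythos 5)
Your proof is correct and takes essentially the same route as the paper's: both split $H$ into its part over $\bigcup \cF_0'$ (a subset of a set of infinitesimal $\mu$-measure, hence Loeb measurable with $\Loeb{\mu}$-measure $0$ by completeness) and its part over $\bigcup(\cF_0\setminus\cF_0')$, which lies in $\cF$ because each atom of $\cF_0\setminus\cF_0'$ meets only one $\cG_0$-atom and is therefore either contained in or disjoint from $H$. Indeed your two pieces coincide exactly with the paper's $H_1$ and $H_2$; the only difference is presentational (you intersect with the two regions rather than splitting a fixed representation of $H$), and your explicit check that $\{F\in\cF_0\setminus\cF_0': F\subset H\}$ is internal is a detail the paper leaves implicit.
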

\begin{proof} 
Let $\cF_1=\cF_0\setminus \cF_0'$. 
Pick $H\in \cH$ and, without loss of generality, assume that $H=\bigcup_{i=1}^{K}(A_i\cap B_i)$ where $A_i\in \cF$, $B_i\in \cG$ and $K\in \NSE{\Nats}$.
Let $\cV=\{A_i\cap B_i: i\leq K\}$. 
Then $H=H_1\cup H_2$ where $H_1=\bigcup \{(A_i\cap B_i)\in \cV: A_i\in \cF_0'\}$ and $H_2=\bigcup \{(A_i\cap B_i)\in \cV: A_i\in \cF_1\}$. 
Clearly, $H_1$ is a subset of $\bigcup \cF_0'$. 
As $\mu(\bigcup \cF_0')\approx 0$, by the completeness of Loeb measure, $H_1$ is Loeb measurable and $\Loeb{\mu}(H_1)=0$.
Note that, for every element $F\in \cF_1$, there exists an unique $G\in \cG_0$ such that $F\subset G$. 
Thus, for every $A_i\in \cF_1$, $A_i\cap B_i$ is either $A_i$ or $\emptyset$.
Thus, we know that $H_2\in \cF$.
Hence we conclude that $H\in \Loeb{\cF}$, completing the proof. 
\end{proof} 

We now explore some sufficient conditions that would imply $\mu(\bigcup \cF_0')\approx 0$. We start with the following lemma. 

\begin{lemma}\label{loebseparate}
Let $(\Omega, \cF, \mu)$ be a hyperfinite probability space and let $\cG$ be a hyperfinite algebra on $\Omega$ such that $\cG\subset \Loeb{\cF}$.
Let $\cA$ be an internal subset of $\cF_0'$. 
Suppose there exists $G_1\in \cG$ such that both $G_1$ and $\Omega\setminus G_1$ intersects every element in $\cA$. 
Then $\mu(\bigcup \cA)\approx 0$.  
\end{lemma}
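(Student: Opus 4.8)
The plan is to exploit the hypothesis that $G_1$ slices every atom of $\cA$ into two nonempty pieces, together with the fact that $G_1\in\cG\subset\Loeb\cF$ is Loeb measurable, even though it is not $\cF$-measurable on any of these atoms. Write $E=\bigcup\cA$. Since $\cA$ is an internal collection of elements of $\cF_0\subset\cF$, the union $E$ is a hyperfinite union of elements of $\cF$, so $E\in\cF$; moreover, because $\cF_0$ is a $\NSE{}$partition, the $\cF$-atoms contained in $E$ are precisely the members of $\cA$, and every $F\in\cF$ with $F\subset E$ is a hyperfinite union of atoms from $\cA$. The point is that $E$ itself lies in $\cF$, so its own inner Loeb content equals $\mu(E)$ and tells us nothing; the leverage will come from splitting $E$ by $G_1$.

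The key step is to compute the inner Loeb measure of $G_1\cap E$. Since $G_1\in\cG\subset\Loeb\cF$ and $E\in\cF\subset\Loeb\cF$, the (internal) set $G_1\cap E$ lies in $\Loeb\cF$. Suppose $F\in\cF$ and $F\subset G_1\cap E$. As $F\subset E$, the set $F$ is a union of atoms from $\cA$; but by hypothesis each such atom meets $\Omega\setminus G_1$, so no atom of $\cA$ is contained in $G_1$, and $F\subset G_1$ forces $F=\emptyset$. Hence the inner approximation $\sup\{\ST(\mu(F)):G_1\cap E\supset F\in\cF\}$ equals $0$, and since $G_1\cap E$ is Loeb measurable this gives $\Loeb\mu(G_1\cap E)=0$. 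The identical argument, now using that each atom of $\cA$ meets $G_1$, shows every $\cF$-subset of $E\setminus G_1$ is empty, so $\Loeb\mu(E\setminus G_1)=0$ as well.

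To conclude, I would assemble the two pieces: by finite additivity of the Loeb measure, $\Loeb\mu(E)=\Loeb\mu(G_1\cap E)+\Loeb\mu(E\setminus G_1)=0$, and since $E\in\cF$ is internal we have $\Loeb\mu(E)=\ST(\mu(E))$, whence $\mu(\bigcup\cA)=\mu(E)\approx0$. (Alternatively, from $\Loeb\mu(G_1\cap E)=\Loeb\mu(E\setminus G_1)=0$ one obtains $\cF$-supersets $F_1\supset G_1\cap E$ and $F_2\supset E\setminus G_1$ with $\mu(F_1)\approx\mu(F_2)\approx0$, and then $E\subset F_1\cup F_2$ yields $\mu(E)\approx0$ directly.) I do not expect a genuine obstacle here: the only substantive move is recognizing that the inner content of each piece vanishes, which is exactly where the two uses of the hypothesis — that both $G_1$ and $\Omega\setminus G_1$ meet every atom in $\cA$ — enter. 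Everything else is bookkeeping with atoms of the partitions $\cF_0$ and $\cG_0$.
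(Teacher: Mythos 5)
Your proof is correct and follows essentially the same route as the paper's: both split $\bigcup\cA$ into the piece inside $G_1$ and the piece outside, show each piece is Loeb-null by combining the Loeb measurability of $G_1$ with the fact that no atom of $\cA$ lies entirely on either side of $G_1$, and then add. The only cosmetic difference is that you compute the inner measure of each piece directly (no nonempty $\cF$-subsets, hence measure zero), while the paper traps $\bigcup\cA\setminus G_1$ inside the null gap $F_{G_1}\setminus G_1$, where $F_{G_1}=\bigcap\{F\in\cF: G_1\subset F\}$ is the outer hull of $G_1$ --- the same mechanism viewed from the other side.
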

\begin{proof} 
Let $F_{G_1}=\bigcap\{F\in \cF: G_1\subset F\}$. 
Since $G_1$ intersects every element in $\cA$, we know that $\bigcup \cA\subset F_{G_1}$. 
Thus, by the Loeb measurability of $G_1$, we know that $\bigcup \cA\setminus G_1$ has Loeb measure $0$. 
Similarly, we can conclude that $\bigcup \cA\setminus (\Omega\setminus G_1)$ has Loeb measure $0$. 
Thus, we can conclude that $\mu(\bigcup \cA)\approx 0$. 
\end{proof}

We now consider the collection of sets in $\cF$ with infinitesimal measure. 
We show that, if for every $F\in \cF$ with $\mu(F)\approx 0$, there exists some $G\in \cG_0$ such that $F\cap G=\emptyset$, 
then $\mu(\bigcup \cF_0')\approx 0$, which, by \cref{immthm}, implies that $\cH\subset \Loeb{\cF}$ (see \cref{measurecor}).
We start with the following technical lemma. 

\begin{lemma}\label{measureapproach}
Let $(\Omega, \cF, \mu)$ be an internal probability measure and let $\cG$ be an internal algebra on $\Omega$ such that $\cG\subset \Loeb{\cF}$.
Let $\cI_0\subset \cF$ consist of all sets $A$ such that: 
\begin{enumerate}
\item $\mu(A)\approx 0$
\item For all $G\in \cG_0$, $A\cap G\neq \emptyset$. 
\item For all $B\in \cF$ such that $B$ is a proper subset of $A$, there exists some $G\in \cG_0$ such that $B\cap G=\emptyset$.
\end{enumerate}
Then $\bigcup \cI_0$ is $\Loeb{\mu}$-measurable
and $\Loeb{\mu}(\bigcup \cF_0'\setminus \bigcup \cI_0)=0$.
\end{lemma}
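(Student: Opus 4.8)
The plan is to treat the two assertions separately. For measurability, the key observation is that conditions (2) and (3) defining membership in $\cI_0$ are internal: the collection $\cJ=\{A\in\cF:(\forall G\in\cG_0)\,A\cap G\neq\emptyset\ \text{ and }\ (\forall B\in\cF)(B\subsetneq A\imp(\exists G\in\cG_0)\,B\cap G=\emptyset)\}$ is internal, being defined by a first-order formula over the internal objects $\cF,\cG_0$. Only the requirement $\mu(A)\approx 0$ of (1) is external, so $\cI_0=\{A\in\cJ:\mu(A)\approx 0\}$. For each $k\in\Nats$ set $\cJ_k=\{A\in\cJ:\mu(A)<1/k\}$; this is internal, and $\bigcup\cJ_k$ is a hyperfinite union of elements of $\cF$, hence lies in $\cF$ and is Loeb measurable. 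I claim $\bigcup\cI_0=\bigcap_{k\in\Nats}\bigcup\cJ_k$. The inclusion $\subseteq$ is immediate. For $\supseteq$, if $\omega\in\bigcup\cJ_k$ for every $k$, then the internal sets $\{A\in\cJ:\omega\in A,\ \mu(A)<1/k\}$ are nonempty and decreasing in $k$, so the saturation principle yields a single $A\in\cJ$ with $\omega\in A$ and $\mu(A)<1/k$ for all $k$, i.e.\ $A\in\cI_0$. Thus $\bigcup\cI_0$ is a countable intersection of Loeb measurable sets, which settles the first assertion.

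For the second assertion, taking complements in the identity just proved gives $\bigcup\cF_0'\setminus\bigcup\cI_0=\bigcup_{k\in\Nats}(\bigcup\cF_0'\setminus\bigcup\cJ_k)$, an increasing union of sets in $\cF$ (both $\bigcup\cF_0'$ and $\bigcup\cJ_k$ are internal). By continuity of $\Loeb{\mu}$ from below it therefore suffices to prove that the internal quantity $\mu(\bigcup\cF_0'\setminus\bigcup\cJ_k)$ is infinitesimal for every standard $k$. Writing $D_k$ for this internal set — the union of those split atoms contained in no minimal $\cG_0$-transversal of $\mu$-measure below $1/k$ — the entire statement reduces to the single internal claim $\mu(D_k)\approx 0$.

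The bound $\mu(D_k)\approx 0$ is the crux, and it is here that the full strength of $\cG\subseteq\Loeb{\cF}$ must be used: not merely that each $\cG$-atom is Loeb measurable, but that every hyperfinite union of $\cG$-atoms is, so that for each internal $\cB\subseteq\cG_0$ the $\cF$-atoms straddling the partition $(\bigcup\cB,\Omega\setminus\bigcup\cB)$ have infinitesimal total $\mu$-measure — this is precisely the input to \cref{loebseparate}. Since the family $\{F\in\cF_0':F\not\subseteq\bigcup\cJ_k\}$ whose union is $D_k$ is internal, \cref{loebseparate} applies to its internal subfamilies with no further saturation. My plan is to argue by contradiction: assuming $\mu(D_k)\ge\delta$ for some standard $\delta>0$, I would exhibit a single $G_1\in\cG$ straddled by an internal subfamily of these atoms of non-infinitesimal measure, contradicting \cref{loebseparate}. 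Producing this common $G_1$ is the main obstacle: the obstruction "$F$ lies in no minimal transversal of measure $<1/k$" must be converted into a genuine straddling obstruction shared across a large internal family, which I expect to require an internal packing/duality analysis of the set-cover problem determined by the incidences between $\cF_0$ and $\cG_0$.
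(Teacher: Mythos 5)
The first half of your proposal is correct and is essentially the paper's own argument: the paper likewise splits off the external condition $\mu(A)\approx 0$, forms the internal families $\cI_n=\{A\in\cF:\mu(A)\le 1/n \text{ and (2),(3) hold}\}$ via the internal definition principle, notes $\bigcup\cI_n\in\cF$ by hyperfiniteness, and writes $\bigcup\cI_0=\bigcap_{n\in\Nats}\bigcup\cI_n$; your saturation argument supplies the inclusion $\supseteq$ that the paper leaves implicit. Your reduction of the second assertion, by continuity from below, to the internal claim $\mu(D_k)\approx 0$ for each standard $k$ is also valid, and you have correctly identified \cref{loebseparate} --- hence the production of a single straddling set in $\cG$ --- as the engine of the proof.

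But the proposal stops exactly where the proof actually happens: you state that producing the straddling set ``is the main obstacle'' and that you ``expect'' it to require an internal packing/duality analysis, so the crux is asserted as a plan, not proved. This is a genuine gap, and it is the central idea of the paper's argument. The paper's construction is elementary, not a duality analysis. Assuming $\Loeb{\mu}(\bigcup\cF_0'\setminus\bigcup\cI_0)>0$, discard from $\cF_0'$ the atoms contained in some member of $\cI_0$, leaving an internal family $\cC$ with $\Loeb{\mu}(\bigcup\cC)>0$ (in your framing, $\cC$ consists of the atoms making up $D_k$). Then select members of $\cG_0$ greedily: writing $\cC_{G_i}=\{C\in\cC:C\cap G_i\neq\emptyset\}$ for the atoms ``collected'' by $G_i$, choose each new $G_k\in\cG_0$ disjoint from $\bigcup_{i<k}\bigcup\cC_{G_i}$; this internal recursion terminates after some $K\in\NSE{\Nats}$ steps, at which point every member of $\cG_0$ meets $\bigcup_{i\le K}\bigcup\cC_{G_i}$. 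Each collected atom meets exactly one selected $G_i$ (later selections are disjoint from it, and meeting an earlier $G_j$ would have forced $G_i$ to be disjoint from it), and since each atom of $\cF_0'$ meets at least two members of the partition $\cG_0$, both $G=\bigcup_{i\le K}G_i\in\cG$ and $\Omega\setminus G$ meet every collected atom; \cref{loebseparate} then gives $\mu\bigl(\bigcup_{i\le K}\bigcup\cC_{G_i}\bigr)\approx 0$. On the other hand, the stopping condition together with the definition of $\cI_0$ --- this is where clause (3), minimal transversality, does its work, and where discarding the atoms lying inside members of $\cI_0$ pays off --- forces $\Loeb{\mu}\bigl(\bigcup_{i\le K}\bigcup\cC_{G_i}\bigr)>0$, a contradiction. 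This one-pass exhaustion of $\cG_0$ is precisely the content your proposal defers, so as written the proposal does not prove the lemma.
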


\begin{proof}
For $n\in \Nats$, let $\cI_n\subset \cF$ consist of all sets $A$ such that: 
\begin{enumerate}
\item $\mu(A)\leq \frac{1}{n}$
\item For all $G\in \cG_0$, $A\cap G\neq \emptyset$. 
\item For all $B\in \cF$ such that $B$ is a proper subset of $A$, there exists some $G\in \cG_0$ such that $B\cap G=\emptyset$.
\end{enumerate}
By the internal definition principle, $\cI_n$ is internal for every $n\in \Nats$. 
As $\cF$ is hyperfinite, we have $\bigcup \cI_n\in \cF$ for every $n\in \Nats$. 
As $\cI_0=\bigcap_{n\in \Nats} \cI_n$, we conclude that $\bigcup \cI_0=\bigcap_{n\in \Nats}\bigcup \cI_n$ is $\Loeb{\mu}$-measurable. 

Suppose $\Loeb{\mu}(\bigcup \cF_0'\setminus \bigcup \cI_0)>0$.
Let $\cJ_0=\{F\in \cF_0': (\exists A\in \cI_0)(F\subset A)\}$. 
It is easy to see that $\bigcup \cJ_0\subset\bigcup \cI_0$. 
Let $\cE$ be the internal algebra generated by $\cC=\cF_0'\setminus \cJ_0$. 
We immediately have $\Loeb{\mu}(\bigcup \cC)>0$.
We now consider the following algorithm. 
\begin{enumerate}
\item Pick any element $G_1\in \cG_0$ and let $\cC_{G_1}=\{C\in \cC: C\cap G_1\neq \emptyset\}$.

\item For $k\geq 2$, assuming $G_1,\dotsc, G_{k-1}$ have already been chosen, pick $G_k\in \cG_0$ such that 
$G_k\cap \bigcup_{i=1}^{k-1}\bigcup \cC_{G_i}=\emptyset$. 
\end{enumerate}
We continue this process until we reach a natural stopping point. 
Without loss of generality, we assume that this algorithm continue for $K\in \NSE{\Nats}$ many steps 
and consider the set $\bigcup_{i=1}^{K}\bigcup \cC_{G_i}$. 
By construction, for every $A\in \cE$ such that $\mu(A)\approx 0$, there exists some $G\in \cG_0$ such that $B\cap G=\emptyset$.
Hence, we can conclude that $\Loeb{\mu}(\bigcup_{i=1}^{K}\bigcup \cC_{G_i})>0$. 
By construction, both $\bigcup_{i=1}^{K} G_i$ and its complement intersect every element in $\bigcup_{i=1}^{K}\bigcup \cC_{G_i}$. 
By \cref{loebseparate}, this is a contradiction. 
\end{proof}

An immediate consequence of \cref{measureapproach} is:

\begin{lemma}\label{resultmeasure}
Let $(\Omega, \cF, \mu)$ be an internal probability measure and let $\cG$ be an internal algebra on $\Omega$ such that $\cG\subset \Loeb{\cF}$. Let $\cF_0$ and $\cG_0$ be the collections of atoms as defined in the beginning of the section. 
Let $\cI_0$ be the same set as in the statement of \cref{measureapproach}. 
Suppose $\Loeb{\mu}(\bigcup \cI_0)=0$. 
Then $\cH\subset \Loeb{\cF}$. 
\end{lemma}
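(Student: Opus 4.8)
The plan is to derive \cref{resultmeasure} as an immediate corollary of \cref{measureapproach} together with \cref{immthm}. The hypothesis that \cref{immthm} needs is $\mu(\bigcup \cF_0')\approx 0$, so the whole task reduces to showing that $\bigcup \cF_0'$ has infinitesimal internal measure; once that is in hand, \cref{immthm} yields $\cH\subset \Loeb{\cF}$ with nothing further to do.

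First I would record that $\bigcup \cF_0'$ is an internal element of $\cF$. The collection $\cF_0'=\{F\in \cF_0: F\text{ intersects at least two elements in }\cG_0\}$ is internal by the internal definition principle, and since $\cF$ is hyperfinite, the hyperfinite union $\bigcup \cF_0'$ lies in $\cF$. In particular $\bigcup \cF_0'\in \Loeb{\cF}$ and $\Loeb{\mu}(\bigcup \cF_0')=\ST(\mu(\bigcup \cF_0'))$, so it suffices to prove $\Loeb{\mu}(\bigcup \cF_0')=0$. Next I would invoke the trivial set-theoretic inclusion
\[
\bigcup \cF_0'\subset \Bigl(\bigcup \cF_0'\setminus \bigcup \cI_0\Bigr)\cup \bigcup \cI_0.
\]
By \cref{measureapproach} the first set on the right satisfies $\Loeb{\mu}(\bigcup \cF_0'\setminus \bigcup \cI_0)=0$, and by the hypothesis of the present lemma the second satisfies $\Loeb{\mu}(\bigcup \cI_0)=0$. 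Since $\bigcup \cF_0'$ is Loeb measurable, monotonicity and finite subadditivity of the Loeb measure then give $\Loeb{\mu}(\bigcup \cF_0')=0$, equivalently $\mu(\bigcup \cF_0')\approx 0$. Applying \cref{immthm} completes the argument.

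There is essentially no hard step: the real content has already been isolated in \cref{measureapproach} (the stopping/pigeonhole construction giving $\Loeb{\mu}(\bigcup \cF_0'\setminus \bigcup \cI_0)=0$) and in \cref{immthm} (the reduction of the Loeb measurability of $\cH$ to $\mu(\bigcup \cF_0')\approx 0$). The only point deserving a moment of care is verifying that $\bigcup \cF_0'$ is genuinely internal, so that being contained in a Loeb-null set upgrades to having Loeb measure zero and hence infinitesimal internal $\mu$-measure; this is exactly where hyperfiniteness of $\cF$ enters.
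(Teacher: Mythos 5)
Your proposal is correct and follows exactly the paper's route: combine \cref{measureapproach} with the hypothesis $\Loeb{\mu}(\bigcup \cI_0)=0$ to get $\Loeb{\mu}(\bigcup \cF_0')=0$, then invoke \cref{immthm}. The paper states this in two lines; your additional checks (internality of $\bigcup \cF_0'$ so that Loeb-nullity yields $\mu(\bigcup \cF_0')\approx 0$, and the subadditivity decomposition) are just the routine details the paper leaves implicit.
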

\begin{proof}
By \cref{measureapproach} and the fact that $\Loeb{\mu}(\bigcup \cI_0)=0$, we can conclude that $\Loeb{\mu}(\bigcup \cF_0')=0$. 
By \cref{immthm}, $\cH\subset \Loeb{\cF}$. 
\end{proof}

We now present the main result of this section, which follows immediately from \cref{resultmeasure}: 

\begin{theorem}\label{measurecor}
Let $(\Omega, \cF, \mu)$ be an internal probability measure and let $\cG$ be an internal algebra on $\Omega$ such that $\cG\subset \Loeb{\cF}$. Let $\cF_0$ and $\cG_0$ be the collections of atoms as defined in the beginning of the section. 
Suppose, for every $F\in \cF$ with $\mu(F)\approx 0$, there exists $G\in \cG_0$ such that $F\cap G=\emptyset$. 
Then $\cH\subset \Loeb{\cF}$. 
\end{theorem}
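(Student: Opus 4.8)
The plan is to reduce immediately to \cref{resultmeasure}, whose conclusion is exactly $\cH \subset \Loeb{\cF}$. That lemma requires only that $\Loeb{\mu}(\bigcup \cI_0) = 0$, where $\cI_0$ is the family introduced in \cref{measureapproach}. So the single thing I would verify is that the standing assumption of the present theorem forces this equality, and in fact I expect it to force the stronger statement that $\cI_0$ is empty.

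To see this I would simply unwind the definition of $\cI_0$. Every $A \in \cI_0$ is required to satisfy both condition (1), namely $\mu(A) \approx 0$, and condition (2), namely $A \cap G \neq \emptyset$ for all $G \in \cG_0$. But the hypothesis of the theorem says precisely that any $F \in \cF$ with $\mu(F) \approx 0$ must have $F \cap G = \emptyset$ for some $G \in \cG_0$. Conditions (1) and (2) are therefore jointly incompatible with the assumption, so no such $A$ can exist and $\cI_0 = \emptyset$. Consequently $\bigcup \cI_0 = \emptyset$ and $\Loeb{\mu}(\bigcup \cI_0) = 0$ trivially, and \cref{resultmeasure} then delivers $\cH \subset \Loeb{\cF}$.

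I do not anticipate any real obstacle here: all of the substantive work---the iterative construction in \cref{measureapproach} together with the separation argument of \cref{loebseparate}---has already been spent in establishing \cref{measureapproach,resultmeasure}. The present theorem is just the clean special case in which the hypothesis renders the only potentially troublesome family $\cI_0$ vacuous. The one step worth spelling out, rather than merely citing, is this incompatibility between the defining conditions of $\cI_0$ and the theorem's assumption; everything past that point is a direct appeal to the earlier lemma.
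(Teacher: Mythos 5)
Your proof is correct and follows exactly the paper's own argument: the paper likewise observes that the theorem's hypothesis makes $\cI_0$ empty (since conditions (1) and (2) defining $\cI_0$ directly contradict the assumption), so $\Loeb{\mu}(\bigcup \cI_0)=0$ trivially, and then invokes \cref{resultmeasure}. The only difference is that you spell out the incompatibility explicitly, which the paper leaves implicit.
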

\begin{proof}
By the assumption of the theorem, we know that $\cI_0$ is empty. 
The result then follows from \cref{resultmeasure}.
\end{proof}

\cref{measurecor} provides a quick way to identify the Loeb measurability of $\cH$. We conclude with the following open question. 

\begin{open problem}
Let $\cM=(\Omega, \cF, \mu)$ and $\cN=(\Omega, \cG, \nu)$ be two hyperfinite probability spaces. 
If $\Loeb{\mu}(\bigcup \cF_0')>0$, is it possible that $\cH\subset \Loeb{\cF}$? 
\end{open problem}

\printbibliography

\end{document}